\newtheorem{theorem}{\bf Theorem}
\newtheorem{definition}{\bf Definition}
\newtheorem{claim}{\bf Claim}
\newtheorem{lemma}{\bf Lemma}
\newcommand{\bx}{{\bf x}}
\newcommand{\by}{{\bf y}}
\newcommand{\bu}{{\bf u}}
\newcommand{\bv}{{\bf v}}
\newcommand{\bw}{{\bf w}}
\title{\LARGE \bf
SDP-based Joint Sensor and Controller Design for \\
Information-regularized Optimal LQG Control
}
\author{Takashi Tanaka$^{1}$ \and Henrik Sandberg$^{2}$ 
\thanks{$^{1}$Laboratory for Information and Decision Systems, Massachusetts Institute of Technology
        {\tt\small ttanaka@mit.edu}. This author is supported by the JSPS Postdoctoral Fellowship.}
\thanks{$^{2}$Department of Automatic Control,
School of Electrical Engineering, KTH Royal Institute of Technology
        {\tt\small hsan@kth.se}
} %
}
\begin{document}

\maketitle
\thispagestyle{empty}
\pagestyle{empty}

\begin{abstract}
We consider a joint sensor and controller design problem for linear Gaussian stochastic systems in which a weighted sum of quadratic control cost and the amount of information acquired by the sensor is minimized. 
This problem formulation is motivated by situations where a control law must be designed in the presence of sensing, communication, and privacy constraints.
We show that the optimal joint sensor-controller design is relatively easy when the sensing policy is restricted to be linear. 
Namely, an explicit form of the optimal linear sensor equation, the Kalman filter, and the certainty equivalence controller that jointly solves the problem can be efficiently found by semidefinite programming (SDP). 
Whether the linearity assumption in our design is restrictive or not is currently an open problem.
\end{abstract}

\section*{Notation}
Lower-case bold characters such as $\bx$ are used to represent random variables. By $\bx\sim \mathcal{N}(\mu, \Sigma)$, we mean that $\bx$ is a multi-dimensional Gaussian random variable with mean vector $\mu$ and covariance matrix $\Sigma$. If $\bx_1, \bx_2, \cdots$ is a sequence of random variables, we write $\bx^t\triangleq (\bx_1,\cdots, \bx_t)$. 
Let $\mathbb{S}_{++}^n$ (resp. $\mathbb{S}_+^n$) be the space of $n$-dimensional real-valued symmetric positive definite (resp. positive semidefinite) matrices. A condition $M\in \mathbb{S}_{++}^n$ (resp. $M\in \mathbb{S}_+^n$) is also written as $M \succ 0$ (resp. $M \succeq 0$).
For a real-valued vector $x\in\mathbb{R}^n$ and a positive semidefinite matrix $Q\succeq 0$, we write $\|x\|_Q^2=x^\top Q x$.

\section{Introduction}
The classical LQG control theory is not concerned with the information-theoretic cost of communication between the sensor and controller devices.
However, communication could be a costly process in practice due to various reasons.
Motivated by such situations, in this paper, we consider a joint sensor and controller design problem, aiming at minimizing the communication between these devices.

In Fig.~\ref{fig:infoLQG}, the dynamical system block represents a linear stochastic system
\begin{equation}
\label{eqprocess}
\bx_{t+1}=A_t \bx_t + B_t \bu_t + \bw_t, \; t=1,\cdots, T
\end{equation}
where $\bx_1\sim \mathcal{N}(0,P_{1|0})$ and $\bw_t\sim \mathcal{N}(0, W_t), t=1,\cdots, T$ are independent random vectors. We assume $P_{1|0}\succ 0$ and $W_t \succ 0$ for every $t=1,\cdots,T$. Suppose $\bx_t\in \mathbb{R}^{n_t}, \bu_t\in \mathbb{R}^{m_t}$, and dimensions can be time varying. The \emph{sensor} block is a data processing unit that has an access to the entire history of the state variables $\bx^t$, the history of control inputs $\bu^{t-1}$ and signals it has generated in the past $\by^{t-1}$, and generates a signal $\by_t \in \mathbb{R}^{r_t}$ at time step $t$. We denote by $\pi_s$ the space of sensor's policies, whose mathematical description will be specified shortly.
The \emph{controller} block is another data processing unit that has an access to $\by^t$ and $\bu^{t-1}$, and generates a control input $\bu_t$ at time step $t$. 
We denote by $\pi_c$ the space of the controller's policies.
We are interested in jointly designing the sensor's and the controller's decision-making policies to solve the following optimization problem:
\begin{align}
\label{mainprob}
\min_{\pi_s\times \pi_c}  J_{\text{cont}}+J_{\text{info}}
\end{align}
\vspace{-2ex}
\begin{align*}
 J_{\text{cont}}&\triangleq \sum_{t=1}^T \frac{1}{2}\mathbb{E}\left( \|\bx_{t+1}\|_{Q_t}^2+\|\bu_t\|_{R_t}^2  \right) \\
J_{\text{info}}&\triangleq \sum_{t=1}^T \gamma_t  I(\bx_t; \by_t| \by^{t-1},\bu^{t-1}).
\end{align*}
We assume that $Q_t \succeq 0$ and $R_t \succ 0$ for every $t=1,\cdots,T$.
The term $I(\bx_t; \by_t| \by^{t-1}, \bu^{t-1})$ denotes the conditional mutual information \cite{CoverThomas}, and $\gamma_t$ is a positive scalar for every $t=1,\cdots, T$\footnote{Suppose $\gamma_t=1 \forall t=1,\cdots, T$. Under the sensor-control architecture we propose in Section \ref{secsummary}, it can be shown that $J_{\text{info}}=I(\bx^T\rightarrow \by^T)$, where the right hand side is known as the \emph{directed information} \cite{massey2005conservation}.}. 
We call (\ref{mainprob}) the \emph{information-regularized LQG control problem}.
\begin{figure}[t]
    \centering
    \includegraphics[width=0.65\columnwidth]{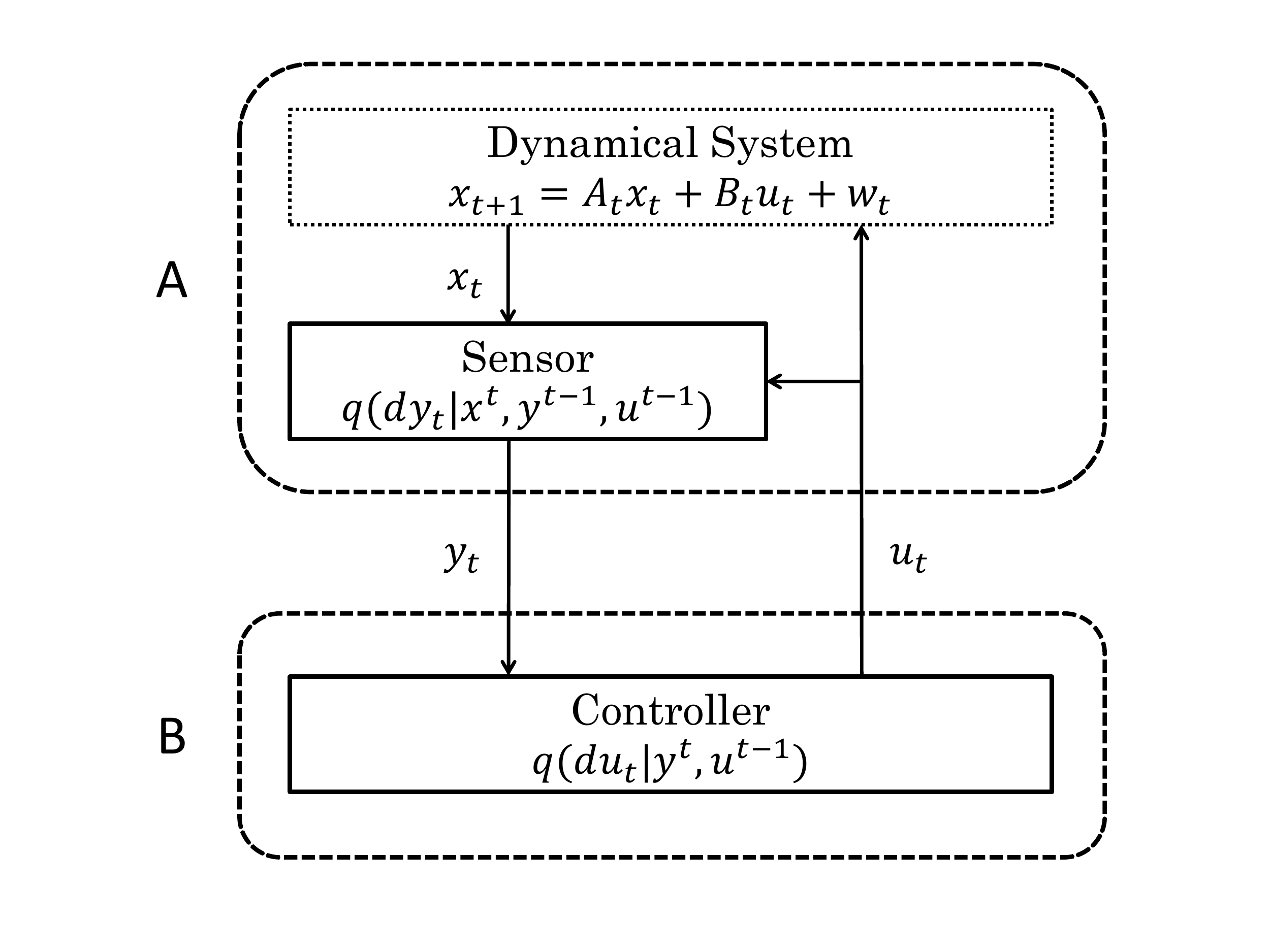}
    \caption{Information-regularized LQG control problem}
    \vspace{-2ex}
    \label{fig:infoLQG}
\end{figure}

In the standard LQG control theory, the sensing policy is typically assumed to be
\begin{equation}
\by_t = C_t \bx_t + \bv_t, \; \bv_t \sim \mathcal{N}(0, V_t) \label{senseeq}
\end{equation}
where $\bv_t$ is a white Gaussian stochastic process and the matrices $\{C_t, V_t\}_{t=1}^T$ are given. 
Due to the well-known separation principle, the optimal controller policy for the standard LQG control problem $\min_{\pi_c} J_{\text{cont}}$ can be found by solving forward and backward Riccati recursions. In (\ref{mainprob}), in contrast, we do not assume (\ref{senseeq}), and allow sensors to be any causal data collecting mechanism in $\pi_s$.
However, $\min_{\pi_s \times \pi_c} J_{\text{cont}}$ is an uninteresting problem, since trivially $\by_t=\bx_t$ (perfect observation) together with the linear-quadratic regulator (LQR) is optimal.
To exclude this trivial solution, we aim at minimizing $J_{\text{cont}}+J_{\text{info}}$ as in (\ref{mainprob}), which amounts to charging the cost $\gamma_t$ for every \emph{bit} of innovative information collected by the sensor at time step $t$. Notice that full observation $\by_t=\bx_t$ results in $J_{\text{info}}=+\infty$.

Although we are not aware of a complete solution to (\ref{mainprob}), we here provide an SDP-based algorithm to construct an optimal \emph{linear} sensor-controller joint policy.
This result turns out to be an extension of an SDP-based algorithm for the \emph{sequential rate-distortion problem} proposed in \cite{1411.7632}.

\section{Applications and Related Work}
\label{secapp}
In this section, we briefly summarize connections between 
the information-regularized LQG control problem  (\ref{mainprob})  and  related  work in the control, information theory, robotics, social science, and economics literature.

\subsection{Control over a communication channel}
In Fig. \ref{fig:infoLQG}, suppose that agents A and B are geographically separated, and the communication channel from A to B is band-limited. Suppose that agents A and B are in collaboration to design the sensor and controller blocks. 
What kind of data should then a sensor collect and transmit, so that B can generate a  satisfactory control signal in a real-time manner?

Feedback control over noisy channels has been a popular research topic in the past two decades. Most of the early contributions focus on stabilization of unstable dynamical systems using feeback control over band-limited communication channels.
A very partial list of papers in this context is \cite{delchamps1990stabilizing,brockett2000quantized,elia2001stabilization,nair2004stabilizability,tsumura2003stabilizability,tatikonda2004control,sahai2006}.
This research direction naturally leads to trade-off studies between the achievable control performance and the required capacity of the sensor-controller communication channel. 
If the communication rate is finite, larger block length (achieving high resolution) is not necessarily preferred since the resulting delay leads to the loss of control performance \cite{borkar1997lqg}.
LQG control performance subject to capacity constraints is considered in \cite{charalambous2008}, where a certain ``separation principle" between control design and communication design is reported. The authors of \cite{freudenberg2011} consider a fundamental performance limitation of the finite horizon minimum-variance control (MVC) over noisy communication channels in the LQG regime.
More comprehensive literature surveys on control designs over communication channels are available in 
\cite{nair2007,hespanha2007survey,baillieul2007,yuksel2013stochastic}.

However, the majority of the existing work in this context assume sensor models and/or channel models \emph{a priori}, and are different from (\ref{mainprob}). A few exceptions include \cite{bansal1989} and \cite{miller1997}, where sensor-controller joint design problems are considered. However, these works are concerned with sensor power constraints rather than information constraints, and are different from (\ref{mainprob}).
Our problem formulation (\ref{mainprob}) falls into a general class of sensor optimization problems considered in \cite{yuksel2012optimization}, where several results are derived regarding the convexity of the problem and the existence of an optimal solution under different choices of topologies in the space of sensors. However, no structural results on specific problems appear there.

\subsection{Bounded rationality}
Broadly, the term \emph{bounded rationality} is used to refer to the limited ability of decision makers (human or robot) to acquire and process information. 
The \emph{rational inattention} model introduced by \cite{sims2003implications} in the economics literature characterizes bounded rationality using the idea of Shannon's channel capacity.
Inspired by this model, recently \cite{shafieepoorfard2013rational} considered an information-constrained LQG control problem, which is similar to  (\ref{mainprob}). 
In this paper, we remove the somewhat restrictive 
 assumptions made in \cite{shafieepoorfard2013rational}, including that a controller there is a time invariant function of the current state only. Furthermore, our SDP-based approach is powerful in handling multi-dimensional systems, while \cite{shafieepoorfard2013rational} is currently restricted to scalar systems.

\subsection{Privacy-preserving control}
In Fig. \ref{fig:infoLQG}, suppose that agent A can privately observe its internal state $\bx_t$, and that $\bx_t$ must be controlled by an external agent B through control input $\bu_t$.
At every time step, a message $\by_t$ containing information about the current state $\bx_t$ is created by the agent A and is sent to the agent B, so that B can compute desirable control inputs. However, sending $\by_t=\bx_t$ may not be desirable for a privacy-aware agent A, since this means a complete loss of privacy. What is then the optimal message $\by_t$?

Suppose that the loss of privacy caused by disclosing $\by_t$ at time step $t$ is quantified by the conditional mutual information $I(\bx_t;\by_t|\by^{t-1}, \bu^{t-1})$.
Conditioning on $\by^{t-1}$ and $\bu^{t-1}$ reflects the fact that agent B knows a realization of these random variables by the time he receives a new message $\by_t$. (Similar quantities are used to evaluate privacy in wiretap channel problems \cite{wyner1975wire}, as well as in more recent database literature \cite{Sankar2013}\cite{Makhdoumi14}.)
Introducing the ``price of privacy" $\gamma_t$, the optimal privacy-preserving control problem can be formulated as (\ref{mainprob}).
In contrast, \cite{le2014differentially} employs \emph{differential privacy} as a privacy measure in dynamic state estimation problems.

\section{Problem Formulation}
\subsection{Information-regularized LQG control problem}
In this paper, both the sensor's and controller's policies are modeled by Borel-measurable stochastic kernels. Set $\mathcal{X}=\mathbb{R}^n$ and $\mathcal{Y}=\mathbb{R}^m$ and let $\mathcal{B_X}$ and $\mathcal{B_Y}$ be the Borel $\sigma$-algebras on $\mathcal{X}$ and $\mathcal{Y}$ respectively, with respect to the usual topology.
\begin{definition}
A \emph{Borel-measurable stochastic kernel} from $(\mathcal{X}\!, \mathcal{B_X})$ to $(\mathcal{Y}\!, \mathcal{B_Y})$ is a map $q(\cdot|\cdot)\!: \mathcal{B_Y} \!\times\! \mathcal{X} \!\rightarrow\! [0,1]$ such that
\begin{itemize}[leftmargin=*]
\item $q(\cdot|x)$ is a probability measure on $(\mathcal{Y}, \mathcal{B_Y})$ for every $x\!\in\! \mathcal{X}$.
\item $q(E|\cdot)$ is $\mathcal{B_X}$-measurable for every $E\in \mathcal{B_Y}$.
\end{itemize}
\end{definition}
A Borel-measurable stochastic kernel from $(\mathcal{X}, \mathcal{B_X})$ to $(\mathcal{Y}, \mathcal{B_Y})$ will be simply referred to as a stochastic kernel from $\mathcal{X}$ to $\mathcal{Y}$, and denoted by $q(dy|x)$.
The space of stochastic kernels from $\mathcal{X}$ to $\mathcal{Y}$ is denoted by $\mathcal{Q}_{\by|\bx}$.

The sensor's policy at time $t$ is a stochastic kernel from $\mathcal{X}^t\times \mathcal{Y}^{t-1}\times \mathcal{U}^{t-1}$ to $\mathcal{Y}_t$. The controller's policy at time $t$ is a stochastic kernel from $\mathcal{Y}^t\times \mathcal{U}^{t-1}$ to $\mathcal{U}_t$. Using the notation above, the policy spaces  
 $\pi_s$ and $\pi_c$  are formally defined by
\[
\pi_s=\prod_{t=1}^T  \mathcal{Q}_{\by_t|\bx^t,\by^{t-1},\bu^{t-1}} \;\;\text{ and }\;\;
\pi_c=\prod_{t=1}^T \mathcal{Q}_{\bu_t|\by^t,\bu^{t-1}}.
\]
Then, (\ref{mainprob}) is an optimization problem over the sequences of stochastic kernels $\{q(dy_t|x^t,y^{t-1},u^{t-1})\}_{t=1}^T \in \pi_s$ and  $\{q(du_t|y^{t},u^{t-1})\}_{t=1}^T \in \pi_c$.
Once an element in $\pi_s\times \pi_c$ is picked, then a joint probability measure $p(dx^T,dy^T,du^T)$ over $\mathcal{X}^T\times \mathcal{Y}^T\times \mathcal{Z}^T$ is uniquely determined (see Proposition~7.28 in \cite{bertsekas1978stochastic}).

\subsection{Restricted problem}
To the best of the authors' knowledge, little is known about the structure of the optimal solution to (\ref{mainprob}). Namely, it is currently unknown whether there exists a jointly \emph{linear} policy in $\pi_s \times \pi_c$ that attains optimality in (\ref{mainprob})\footnote{Our problem is different from the optimal LQG control over Gaussian channels, where a linear encoder-controller pair is optimal (e.g.,  \cite{yuksel2013stochastic} Ch. 11).}.
Hence, in this paper, we focus on a restricted problem in which sensor's policy is restricted to the form  (\ref{senseeq}).
That is, we consider
\begin{align}
\label{simpleprob}
\min_{\pi_s^{\text{lin}}\times \pi_c}  J_{\text{cont}}+J_{\text{info}}
\end{align}
where $\pi_s^{\text{lin}}$ is the space of sequences of stochastic kernels $\{q(dy_t|x_t)\}_{t=1}^T$, which can be realized by a linear sensor equation (\ref{senseeq}) with some $C_t$ and $V_t$ to be determined.
We tackle this problem by applying an SDP-based solution to the \emph{sequential rate-distortion} (SRD) problem obtained in \cite{1411.7632}.
Based on the existence of a linear optimal solution to the Gaussian SRD problem (as shown in \cite{TatikondaThesis}), we will show that (\ref{simpleprob}) has a jointly linear optimal solution.

\section{Summary of the Result}
\label{secsummary}

In this section, we provide a complete solution to the restricted information-regularized LQG control problem (\ref{simpleprob}).
Specifically, we claim that the following numerical procedure allows us to explicitly construct the optimal stochastic kernels $\{q(dy_t|x_t)\}_{t=1}^T \in \pi_s^{\text{lin}}$ and $\{q(du_t|y^t, u^{t-1})\}_{t=1}^T\in \pi_c$ for (\ref{simpleprob}).

{\bf Step~1.} (Controller design) Compute a backward Riccati recursion.
\begin{subequations}
\begin{align}
S_t&=\begin{cases} Q_t & \text{ if } t=T \\ Q_t+N_{t+1} & \text{ if } t=1,\cdots, T-1 \end{cases} \\
M_t&=B_t^\top S_t B_t + R_t \\
N_t&=A_t^\top (S_t-S_t B_t M_t^{-1} B_t^\top S_t) A_t \\
K_t&= -M_t^{-1} B_t^\top S_t A_t \\
\Theta_t&= K_t^\top M_t K_t 
\end{align}
\end{subequations}
The matrix $S_t$ is commonly understood in the LQR theory as the ``cost-to-go" function, while $K_t$ is the optimal control gain. The auxiliary parameter $\Theta_t$ will be used in Step 2.

{\bf Step~2.} (Covariance scheduling) Solve a max-det problem with respect to $\{P_{t|t}, \Pi_t\}_{t=1}^T$ subject to the LMI constraints:
\begin{subequations}
\label{optprob3}
\begin{align}
\min & \;\; \sum_{t=1}^T \left(\frac{1}{2}\text{Tr}(\Theta_t P_{t|t})-\frac{\gamma_t}{2}\log\det \Pi_t \right)+C \\
\text{s.t.} & \;\; \Pi_t \succ  0, \;\; t=1, \cdots, T  \label{optprob3Pi}\\
& \;\; P_{t+1|t+1}\preceq A_t P_{t|t}A_t^\top +W_t, \;\; t\!=\!1,\cdots\!, T\!-\!1 \\
& \;\; P_{1|1}\preceq P_{1|0}, P_{T|T}=\Pi_T \\
&  \left[\!\! \begin{array}{cc}P_{t|t}\!-\!\Pi_t\!\!\! &\!\! P_{t|t}A_t^\top \\
A_tP_{t|t} \!\!\!&\!\! W_t\!+\!A_t P_{t|t}A_t^\top  \end{array}\!\!\right]\! \succeq\! 0, \;\; t\!=\!1,\cdots\!, T\!-\!1 
\end{align}
\end{subequations} 
where $C$ is a constant\footnote{
The constant is given by
\begin{align*}
C=&\sum_{t=1}^{T-1} \!\left( \frac{\gamma_t n_t}{2}\log \frac{\gamma_{t+1}}{\gamma_t} 
+\!\frac{\gamma_t}{2}\log\det W_t\right) \!\!+\!\frac{\gamma_1}{2}\log\det P_{1|0} \\ 
&+\frac{1}{2}\text{Tr}(N_1P_{1|0})+\frac{1}{2}\sum_{t=1}^T\text{Tr}(W_tS_t).
\end{align*}}. Due to the boundedness of the feasible set, (\ref{optprob3}) has an optimal solution\footnote{One can replace (\ref{optprob3Pi}) with $\Pi_t\succeq \epsilon I$ without altering the result. This conversion makes the feasible set compact and thus the Weierstrass theorem can be used.}.

{\bf Step~3.} (Sensor design) Set $r_t=\text{rank}(P_{t|t}^{-1}-P_{t|t-1}^{-1})$ for every $t=1,\cdots, T$, where 
\[
P_{t|t-1}\triangleq A_{t-1}P_{t-1|t-1}A_{t-1}^\top + W_{t-1}, t=2,\cdots, T.
\]
 Choose matrices $C_t\in \mathbb{R}^{r_t\times n_t}$ and $V_t\in \mathbb{S}_{++}^{r_t}$ so that they satisfy
\begin{equation}
\label{cvconst}
 C_t^\top V_t^{-1}C_t= P_{t|t}^{-1}-P_{t|t-1}^{-1}
\end{equation}
for $t=1,\cdots, T$. For instance, the singular value decomposition can be used. In particular, in case of $r_t=0$, $C_t$ and $V_t$ are considered to be null (zero dimensional) matrices.

{\bf Step~4.} (Filter design) Determine the Kalman gains by
\begin{equation}
\label{eqkalmangain}
L_t=P_{t|t-1}C_t^\top (C_t P_{t|t-1}C_t^\top + V_t)^{-1}.
\end{equation}
If $r_t=0$, $L_t$ is a null matrix.

{\bf Step~5.} (Policy construction) Using $\{C_t, V_t, L_t, K_t\}_{t=1}^T$ obtained above, define the sensor's policy $\{q(dy_t|x_t)\}_{t=1}^T \in \pi_s^{\text{lin}}$ by equation (\ref{senseeq}). When $r_t=0$, the optimal dimension of the sensing vector $\by_t$ is zero, meaning that no sensing is the optimal strategy.  On the other hand, define a controller's policy $\{q(du_t|y^t, u^{t-1})\}_{t=1}^T\in \pi_c$ by the certainty equivalence controller $\bu_t = K_t \hat{\bx}_t$ where $\hat{\bx}_t=\mathbb{E}(\bx_t|\by^t, \bu^{t-1})$ is obtained by the standard Kalman filter
\begin{subequations}
\label{eqkalmanfilter}
\begin{align}
&\hat{\bx}_t=\hat{\bx}_{t|t-1}+L_t(\by_t-C_t\hat{\bx}_{t|t-1}) \label{eqkalmanfilter1}\\
&\hat{\bx}_{t+1|t}=A_t\hat{\bx}_t+B_t \bu_t. \label{eqkalmanfilter2}
\end{align}
\end{subequations}
When $r_t=0$, (\ref{eqkalmanfilter1}) is simply replaced by $\hat{\bx}_t=\hat{\bx}_{t|t-1}$.
\newpage
\begin{theorem}
There exists a joint sensor-controller policy in $\pi_s^{\text{lin}}\times \pi_c$ that attains optimality in (\ref{simpleprob}). The optimal value of (\ref{simpleprob}) coincides with the optimal value of the max-det problem~(\ref{optprob3}). Furthermore, an optimal policy can be constructed by the Steps 1-5.
\end{theorem}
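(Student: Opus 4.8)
The plan is to reduce the restricted problem to a finite-dimensional convex program over error-covariance matrices and then certify that Steps~1--5 realise an optimiser of that program. First I would exploit Gaussianity: since every sensor in $\pi_s^{\text{lin}}$ has the control-free form (\ref{senseeq}), the joint process is linear-Gaussian and the Kalman error covariance $P_{t|t}$, together with the conditional covariances, is independent of the realised controls. Consequently $J_{\text{info}}$ depends only on $\{P_{t|t}\}$ and not on $\pi_c$, so minimising over $\pi_c$ collapses to the classical LQG controller problem; the separation/certainty-equivalence theorem then fixes the optimal controller as $\bu_t=K_t\hat\bx_t$ with $\hat\bx_t$ from the Kalman filter (Steps~4--5), valid even against nonlinear controllers. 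This is the step that justifies the entire Step~1 recursion as the \emph{only} remaining controller degree of freedom.

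Next I would rewrite both cost terms as explicit functions of the covariance schedule $\{P_{t|t}\}_{t=1}^T$. For $J_{\text{cont}}$ I would use the standard LQG cost decomposition: with the cost-to-go matrices $S_t,N_t$ of Step~1, $J_{\text{cont}}$ equals the full-information LQR value $\frac12\text{Tr}(N_1P_{1|0})+\frac12\sum_t\text{Tr}(W_tS_t)$ (a constant, absorbed into $C$) plus the estimation penalty $\sum_t\frac12\text{Tr}(\Theta_tP_{t|t})$, using that the filter innovation is orthogonal to the estimate so the excess control cost is $\frac12\mathbb{E}\|K_t(\bx_t-\hat\bx_t)\|_{M_t}^2$. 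For $J_{\text{info}}$ I would invoke the Gaussian conditional-mutual-information formula $I(\bx_t;\by_t|\by^{t-1},\bu^{t-1})=\tfrac12(\log\det P_{t|t-1}-\log\det P_{t|t})$ with $P_{t|t-1}=A_{t-1}P_{t-1|t-1}A_{t-1}^\top+W_{t-1}$.

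I would then characterise feasibility. Because a linear measurement only sharpens the posterior, the Kalman update $P_{t|t}^{-1}=P_{t|t-1}^{-1}+C_t^\top V_t^{-1}C_t$ shows that a schedule $\{P_{t|t}\}$ is realisable by some linear sensor if and only if $P_{t|t}\preceq P_{t|t-1}$, i.e. exactly the inequality constraints and the initial condition $P_{1|1}\preceq P_{1|0}$ of (\ref{optprob3}); conversely any feasible schedule is realised by solving (\ref{cvconst}) for $C_t,V_t$ with $r_t=\text{rank}(P_{t|t}^{-1}-P_{t|t-1}^{-1})$, which is Step~3. The last ingredient is convexity: $-\log\det P_{t|t}$ is convex but $+\log\det P_{t|t-1}$ is concave in $P_{t-1|t-1}$, so $J_{\text{info}}$ is not jointly convex in the schedule. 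To convexify I would introduce $\Pi_t$ and impose the displayed LMI, whose Schur complement (using $P_{t+1|t}=W_t+A_tP_{t|t}A_t^\top\succ0$) is $\Pi_t\preceq(P_{t|t}^{-1}+A_t^\top W_t^{-1}A_t)^{-1}$; since the objective rewards $\log\det\Pi_t$, this bound is tight at the optimum, and the determinant identity $\det(P_{t|t}^{-1}+A_t^\top W_t^{-1}A_t)=\det P_{t+1|t}/(\det W_t\det P_{t|t})$ turns the offending concave term $+\tfrac{\gamma_t}{2}\log\det P_{t+1|t}$ into the convex $-\tfrac{\gamma_t}{2}\log\det\Pi_t$. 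Telescoping the resulting determinants and rescaling each step to absorb the time-varying weights $\gamma_t$ produces precisely the constant $C$ (the $\tfrac{\gamma_t n_t}{2}\log(\gamma_{t+1}/\gamma_t)$, $\log\det W_t$, and $\log\det P_{1|0}$ terms), so the SDP objective equals $J_{\text{cont}}+J_{\text{info}}$ on the feasible set. I would close by quoting the Gaussian-SRD results of \cite{1411.7632,TatikondaThesis} to certify that the max-det value is attained and achievable by a linear sensor, and that Steps~1--5 reconstruct such a policy; existence then follows from compactness of the (suitably $\epsilon$-regularised) feasible set via Weierstrass.

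The hard part will be the exactness of the $\Pi_t$-reformulation under time-varying $\gamma_t$: verifying that the introduced variables do not relax the problem (tightness of the LMI at optimality) and that the telescoping plus reweighting reproduces $J_{\text{cont}}+J_{\text{info}}$ exactly up to the stated constant $C$, rather than merely bounding it. A secondary subtlety is confirming that optimising over the covariance schedule alone loses nothing relative to optimising over all linear sensor kernels and all (possibly nonlinear) controllers in $\pi_c$ --- this is exactly where the separation principle and the SRD linear-optimality theorem carry the argument.
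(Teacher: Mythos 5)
Your proposal is correct and follows essentially the same route as the paper's derivation: control-independence of $J_{\text{info}}$ under linear sensing, the Stackelberg/best-response decomposition with the certainty-equivalence controller (the paper's Lemma~\ref{claim1}, proved there by dynamic programming), reduction to a covariance-scheduling problem, the $\Pi_t$/Schur-complement convexification of the concave $\log\det P_{t+1|t}$ term (the paper's Sylvester-determinant step with the rescaled $\tilde{A}_t,\tilde{W}_t$), recovery of $C_t,V_t$ via (\ref{cvconst}), and existence by Weierstrass on the $\epsilon$-regularized feasible set. The only discrepancy is a harmless index slip ($\gamma_t$ in place of $\gamma_{t+1}$ on the concave term), which your own rescaling remark already accounts for.
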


\section{Derivation of the Main Result}
\label{secderivation}
We first show that, once the sensor's policy $\{q_{\by_t|\bx_t}\}_{t=1}^T \in \pi_s^{\text{lin}}$ is fixed, then $J_{\text{info}}$ does not depend on the choice of controller's policy. This observation allows us to rewrite (\ref{mainprob}) as
\begin{equation}
\label{eqstackelberg}
\min_{\pi_s^{\text{lin}}} \left(J_{\text{info}} +\min_{\pi_c} J_{\text{cont}}\right). 
\end{equation}
Then, we interpret (\ref{eqstackelberg}) as a two-player Stackelberg game (see, e.g., \cite{tamer1999dynamic}) in which the sensor agent (agent A in Fig. \ref{fig:infoLQG}) is the leader and the controller agent (agent B) is the follower. 
If the sensor's policy is given, the controller's best response can be explicitly found by solving a stochastic optimal control problem $\min_{\pi_c} J_{\text{cont}}$.
With an explicit expression of $\min_{\pi_c} J_{\text{cont}}$, we show that the outer optimization problem in (\ref{eqstackelberg}) over $\pi_s^{\text{lin}}$ becomes the \emph{sequential rate-distortion problem} \cite{TatikondaThesis}, whose optimal solution can be constructed by solving an SDP problem \cite{1411.7632}.

Fix a joint sensor-controller policy in $\pi_s^{\text{lin}}\times \pi_c$ and let $p(dx^T,dy^T,du^T)$ be the resulting joint probability measure. Let $p(dx_t|y^{t-1},u^{t-1})$ and $p(dx_t|y^{t},u^{t-1})$ be probability measures obtained by conditioning and marginalizing $p(dx^T,dy^T,du^T)$. It follows from the standard Kalman filtering theory that
\begin{align*}
p(dx_t|y^{t-1},u^{t-1}) &\sim \mathcal{N}(\hat{\bx}_{t|t-1}, P_{t|t-1}) \\
p(dx_t|y^t,u^{t-1}) &\sim \mathcal{N}(\hat{\bx}_t, P_{t|t}) 
\end{align*}
where $\{P_{t|t}\}$ and $\{P_{t|t-1}\}$ satisfy 
\begin{subequations}
\label{kalmanrec}
\begin{align}
 P_{t|t-1}&=A_{t-1}P_{t-1|t-1}A_{t-1}^\top + W_{t-1} \\
 P_{t|t}&=(P_{t|t-1}^{-1}+C_t^\top V_t^{-1}C_t)^{-1},
\end{align}
\end{subequations}
while $\hat{\bx}_t$ and $\hat{\bx}_{t|t-1}$ are recursively obtained by (\ref{eqkalmanfilter}).
Using matrices $\{P_{t|t}\}$ and $\{P_{t|t-1}\}$, the mutual information terms can be explicitly written as
\begin{align}
I(\bx_t;\by_t|\by^{t-1},\bu^{t-1})&=h(\bx_t|\by^{t-1},\bu^{t-1})-h(\bx_t|\by^t,\bu^{t-1}) \nonumber \\
&=\frac{1}{2}\log\det P_{t|t-1}\!-\!\frac{1}{2}\log\det P_{t|t}. \nonumber
\end{align}
Therefore,
\begin{align*}
J_{\text{info}}&\triangleq \sum_{t=1}^T \gamma_t I(\bx_t;\by_t|\by^{t-1},\bu^{t-1}) \\
&=\sum_{t=1}^{T-1}\left(\frac{\gamma_{t+1}}{2}\log\det P_{t+1|t}-\frac{\gamma_t}{2}\log\det P_{t|t}\right) \\
&\hspace{10ex}+\frac{\gamma_1}{2}\log\det P_{1|0}-\frac{\gamma_T}{2}\log\det P_{T|T}.
\end{align*}
In particular, this result clearly shows that the mutual information terms are \emph{control-independent}, since they are completely determined by (\ref{kalmanrec}) once the sequence of matrices $\{C_t,V_t\}_{t=1}^T$ is fixed. This observation justifies the equivalence between (\ref{simpleprob}) and (\ref{eqstackelberg}).

Next, let us focus on the stochastic optimal control problem $\min_{\pi_c} J_{\text{cont}}$, whose solution is well understood.
\begin{lemma}\label{claim1}
For every fixed $\{q_{\by_t|\bx_t}\}_{t=1}^T \in \pi_s^{\text{lin}}$, 
the certainty equivalence controller $\bu_t=K_t\hat{\bx}_t$ where $\hat{\bx}_t=\mathbb{E}(\bx_t|\by^t,\bu^{t-1})$ is an optimizer of $\min_{\pi_c} J_{\text{cont}}$. 
Moreover,
\[
\min_{\pi_c} J_{\text{cont}}\!=\!\frac{1}{2}\text{Tr}(N_1\!P_{1|0})\!+\!\frac{1}{2}\!\sum_{k=1}^T\!\!\left(\text{Tr}(W_kS_k)\!+\!\text{Tr}(\Theta_kP_{k|k}) \right)\!.\]
\end{lemma}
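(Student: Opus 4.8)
The plan is to reduce $\min_{\pi_c}J_{\text{cont}}$ to a fully observed linear-quadratic problem driven by the Kalman filter, and then solve it by backward dynamic programming. The starting point is the fact, already recorded above, that for a fixed linear sensor the conditional law $p(dx_t|y^t,u^{t-1})$ is Gaussian $\mathcal{N}(\hat\bx_t,P_{t|t})$ with $P_{t|t}$ given by the deterministic recursion (\ref{kalmanrec}), hence \emph{control-independent}. Consequently the conditional mean $\hat\bx_t=\mathbb{E}(\bx_t|\by^t,\bu^{t-1})$ is a sufficient statistic for the controller: for any $\mathcal{F}_t\triangleq\sigma(\by^t,\bu^{t-1})$-measurable input $\bu_t$, the one-step cost conditioned on $\mathcal{F}_t$ depends on the past only through $\hat\bx_t$. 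Writing $\bx_t=\hat\bx_t+\be_t$ with $\be_t$ orthogonal to $\mathcal{F}_t$, $\mathbb{E}(\be_t|\mathcal{F}_t)=0$, $\text{Cov}(\be_t|\mathcal{F}_t)=P_{t|t}$, and using $\bw_t\perp\mathcal{F}_t$, I would first record
\[
\mathbb{E}\big(\|\bx_{t+1}\|_{Q_t}^2\,|\,\mathcal{F}_t\big)=\|A_t\hat\bx_t+B_t\bu_t\|_{Q_t}^2+\text{Tr}\big(Q_t(A_tP_{t|t}A_t^\top+W_t)\big).
\]

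Next I would establish the filtered dynamics. The Kalman recursion (\ref{eqkalmanfilter}) gives $\hat\bx_{t+1}=A_t\hat\bx_t+B_t\bu_t+\bg_{t+1}$, where $\bg_{t+1}=L_{t+1}(\by_{t+1}-C_{t+1}\hat\bx_{t+1|t})$ is a zero-mean Gaussian vector independent of $\mathcal{F}_t$. Its covariance follows from the measurement-update identity $L_{t+1}(C_{t+1}P_{t+1|t}C_{t+1}^\top+V_{t+1})L_{t+1}^\top=P_{t+1|t}-P_{t+1|t+1}$, so that $\text{Cov}(\bg_{t+1})=P_{t+1|t}-P_{t+1|t+1}$. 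Thus $\{\hat\bx_t\}$ evolves as a fully observed linear system with additive, control-independent Gaussian noise of known covariance, and $\min_{\pi_c}J_{\text{cont}}$ becomes a standard LQR problem in the variable $\hat\bx_t$.

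I would then solve this LQR by backward induction, positing the value function $\tilde V_t(\hat x)=\tfrac12\hat x^\top N_t\hat x+d_t$. At the terminal stage $\tilde V_T$ is obtained by minimizing the one-step cost above over $\bu_T$; the quadratic part yields $N_T$ and the minimizer $\bu_T=K_T\hat\bx_T$, matching Step 1 with $S_T=Q_T$. For $t<T$, substituting the inductive hypothesis and using $\mathbb{E}(\hat\bx_{t+1}^\top N_{t+1}\hat\bx_{t+1}|\mathcal{F}_t)=\|A_t\hat\bx_t+B_t\bu_t\|_{N_{t+1}}^2+\text{Tr}\big(N_{t+1}(P_{t+1|t}-P_{t+1|t+1})\big)$, the control-dependent part of the cost-to-go is $\tfrac12\|A_t\hat\bx_t+B_t\bu_t\|_{S_t}^2+\tfrac12\|\bu_t\|_{R_t}^2$ with $S_t=Q_t+N_{t+1}$; its minimizer is the certainty-equivalence law $\bu_t=K_t\hat\bx_t$ and its value is $\tfrac12\hat\bx_t^\top N_t\hat\bx_t$, which closes the induction and proves optimality of the certainty-equivalence controller. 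The constant then obeys $d_t=d_{t+1}+\tfrac12\text{Tr}\big(Q_t(A_tP_{t|t}A_t^\top+W_t)\big)+\tfrac12\text{Tr}\big(N_{t+1}(P_{t+1|t}-P_{t+1|t+1})\big)$, with $d_T=\tfrac12\text{Tr}(Q_T(A_TP_{T|T}A_T^\top+W_T))$.

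Finally I would evaluate $\min_{\pi_c}J_{\text{cont}}=\mathbb{E}\,\tilde V_1(\hat\bx_1)=\tfrac12\text{Tr}\big(N_1(P_{1|0}-P_{1|1})\big)+d_1$, using $\text{Cov}(\hat\bx_1)=P_{1|0}-P_{1|1}$. The main work is the bookkeeping that collapses $d_1$ into the claimed closed form: expanding $P_{t|t-1}=A_{t-1}P_{t-1|t-1}A_{t-1}^\top+W_{t-1}$ throughout and regrouping by the matrix multiplying each $P_{t|t}$ and each $W_t$, the identities $S_t=Q_t+N_{t+1}$ (with $S_T=Q_T$) and $\Theta_t=A_t^\top S_tA_t-N_t$ (which follows directly from the definitions of $N_t$, $K_t$ and $\Theta_t$ in Step 1) make the sums telescope, so that the coefficient of $P_{t|t}$ reduces to $\Theta_t$ and that of $W_t$ to $S_t$, leaving exactly $\tfrac12\text{Tr}(N_1P_{1|0})+\tfrac12\sum_t\big(\text{Tr}(W_tS_t)+\text{Tr}(\Theta_tP_{t|t})\big)$. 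I expect this telescoping — in particular verifying $\Theta_t=A_t^\top S_tA_t-N_t$ and matching the boundary terms at $t=1$ and $t=T$ — to be the only delicate step, the remainder being the classical LQG separation argument.
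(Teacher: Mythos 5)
Your proposal is correct, but it follows a genuinely different route from the paper's. The paper runs dynamic programming directly on the \emph{true} state: its value-function ansatz is $J_t=\frac{1}{2}\mathbb{E}\|\bx_t\|_{N_t}^2+\frac{1}{2}\sum_{k=t}^T\left(\text{Tr}(W_kS_k)+\text{Tr}(\Theta_kP_{k|k})\right)$, each backward step completes the square as $\mathbb{E}\left(\|A_t\bx_t+B_t\bu_t+\bw_t\|_{S_t}^2+\|\bu_t\|_{R_t}^2\right)=\mathbb{E}\left(\|\bx_t\|_{N_t}^2+\|\bw_t\|_{S_t}^2+\|\bu_t-K_t\bx_t\|_{M_t}^2\right)$, and then the MMSE property of the conditional mean gives $\min\mathbb{E}\|\bu_t-K_t\bx_t\|_{M_t}^2=\text{Tr}(\Theta_tP_{t|t})$; the constants in the lemma thus appear stage by stage, with no final bookkeeping. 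You instead pass to the \emph{separated} problem: a fully observed LQR driven by the filtered state $\hat{\bx}_t$ and the innovations noise with covariance $P_{t+1|t}-P_{t+1|t+1}$, with value function $\frac{1}{2}\hat{x}^\top N_t\hat{x}+d_t$. This makes certainty equivalence completely transparent (it is literally an LQR in $\hat{\bx}_t$), at the price of extra probabilistic ingredients (the innovations representation and its independence properties, and $\text{Cov}(\hat{\bx}_1)=P_{1|0}-P_{1|1}$) and of the terminal telescoping, which the paper's decomposition avoids. I verified your telescoping: with the identity $N_t+\Theta_t=A_t^\top S_tA_t$ (which indeed follows from the Step-1 definitions) and $P_{t+1|t}=A_tP_{t|t}A_t^\top+W_t$, one gets $d_1=\frac{1}{2}\text{Tr}(N_1P_{1|1})+\frac{1}{2}\sum_{t=1}^T\left(\text{Tr}(W_tS_t)+\text{Tr}(\Theta_tP_{t|t})\right)$, and adding $\frac{1}{2}\text{Tr}\left(N_1(P_{1|0}-P_{1|1})\right)$ yields exactly the claimed value, so your argument closes. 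One small point of rigor worth adding: since $\pi_c$ consists of stochastic kernels, the controls may be randomized, so your innovation $\bg_{t+1}$ must be independent of $\sigma(\by^t,\bu^t)$ and not merely of $\sigma(\by^t,\bu^{t-1})$; this holds because any randomization in $\bu_t$ is conditionally independent of the estimation error given $(\by^t,\bu^{t-1})$, and the paper's measure-valued dynamic programming accommodates randomized policies in the same implicit way.
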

\begin{proof}
This is a standard result and can be shown by dynamic programming.
A proof is provided in Appendix.
\end{proof}

Combining the results so far, we have shown that
\begin{align}
& J_{\text{info}}+ \min_{\pi_c} J_{\text{cont}} \nonumber \\
&=\!\!\sum_{t=1}^{T-1}\!\left(\!\frac{1}{2}\text{Tr}(\Theta_t P_{t|t})\!+\!\frac{\gamma_{t+1}}{2}\log\det P_{t+1|t}\!-\!\frac{\gamma_t}{2}\log\det P_{t|t}\! \right) \nonumber \\
&\;\;\;\;\;+\frac{1}{2}\text{Tr}(\Theta_T P_{T|T})-\frac{\gamma_T}{2}\log\det P_{T|T}+c \label{bestresponse}
\end{align}
where $c\!=\!\frac{1}{2}\text{Tr}(N_1P_{1|0})+\frac{\gamma_1}{2}\log\det P_{1|0}+\frac{1}{2}\sum_{t=1}^T \!\! \text{Tr}(W_tS_t)$
 is a constant. The expression (\ref{bestresponse}) is the cost of the original problem (\ref{mainprob}) when the sensor model $\{C_t, V_t\}_{t=1}^T$ is fixed and the controller agent (Stackelberg follower) reacts with the best response. Notice that  (\ref{bestresponse}) is a function of the sequence $\{C_t, V_t\}_{t=1}^T$, since the matrices $P_{t|t}$ and $P_{t+1|t}$ are determined by  (\ref{kalmanrec}). 
 
 Now we have formulated a problem for the sensor agent (Stackelberg leader). Namely, the sensor agent needs to find the optimal sequence of matrices $\{C_t, V_t\}_{t=1}^T$ (as well as their dimensions) that minimizes (\ref{bestresponse}). Next, we show that this can be done very efficiently by solving a semidefinite programming problem.

Let us first focus on the quantity
\begin{equation}
\label{gammap}
\frac{\gamma_{t+1}}{2}\log\det P_{t+1|t}-\frac{\gamma_t}{2}\log\det P_{t|t}.
\end{equation}
Introducing $\tilde{A}_t=\sqrt{\frac{\gamma_{t+1}}{\gamma_t}}A_t$ and $\tilde{W}_t=\frac{\gamma_{t+1}}{\gamma_t}W_t$,

\begin{subequations}
\begin{align}
\frac{2}{\gamma_t} &\times  \text{(\ref{gammap})} 
= \log\det (\tilde{A}_tP_{t|t}\tilde{A}_t^\top+\tilde{W}_t)-\log\det P_{t|t} \nonumber \\
&=\log\det \tilde{W}_t-\log\det(P_{t|t}^{-1}+A_t^\top W_t^{-1}A_t)^{-1} \label{sdpderivation1_2} \\
&=\min \;\; \log\det W_t\!-\!\log\det \Pi_t\!+\!n_t\log\left(\!\frac{\gamma_{t+1}}{\gamma_t}\!\right) \label{sdpderivation2} \\
& \hspace{3ex} \text{s.t.} \hspace{3ex} 0\prec \Pi_t \preceq (P_{t|t}^{-1}+A_t^\top W_t^{-1}A_t)^{-1} \nonumber \\
&=\min \;\; \log\det W_t\!-\!\log\det \Pi_t\!+\!n_t\log\left(\!\frac{\gamma_{t+1}}{\gamma_t}\!\right) \label{sdpderivation3} \\
& \hspace{3ex} \text{s.t.} \hspace{2ex} \left[\!\begin{array}{cc}
P_{t|t}\!-\!\Pi_t \!&\! P_{t|t}A_t^\top \\
A_t P_{t|t} \!&\! A_tP_{t|t}A_t^\top \!+\!W_t
\end{array}\!\right] \succeq 0, \Pi_t \succ 0 \nonumber 
\end{align}
\end{subequations}
We have used Sylvester's determinant theorem in step~(\ref{sdpderivation1_2}).  The quantity (\ref{sdpderivation1_2}) is equal to the optimal value of a constrained optimization problem (\ref{sdpderivation2}) with decision variables $P_{t|t}$ and $\Pi_t$, and this rewriting is possible because of the monotonicity of the determinant function. In  (\ref{sdpderivation3}), the constraint $\Pi_t \preceq (P_{t|t}^{-1}+A_t^\top W_t^{-1}A_t)^{-1}$ is rewritten using the Schur complement formula. The final expression  (\ref{sdpderivation3}) is particularly useful, since this is a max-det problem subject to linear matrix inequality (LMI) constraints.

Applying the discussion above to every $t=1,\cdots, T-1$, and introducing $\Pi_T=P_{T|T}$ for notational convenience, it follows from (\ref{bestresponse}) that the optimal $J$ is equal to the value of the following optimization problem with respect to the decision variables $\{P_{t|t}, \Pi_t, C_t, V_t\}_{t=1}^T$:
\begin{align*}
\min & \sum_{t=1}^T \left(\frac{1}{2}\text{Tr}(\Theta_t P_{t|t})-\frac{\gamma_t}{2}\log\det \Pi_t\right)+C \\
\text{s.t.} & \left[\!\begin{array}{cc}
P_{t|t}\!-\!\Pi_t \!&\! P_{t|t}A_t^\top \\
A_t P_{t|t} \!&\! A_tP_{t|t}A_t^\top \!+\!W_t
\end{array}\!\right] \succeq 0, t\!=\!1,\cdots\!, T\!-\!1 \\
& \Pi_t \succ 0, t=1,\cdots T \\
& \Pi_T=P_{T|T} \\
& P_{1|1}^{-1}=P_{1|0}^{-1}+C_1^\top V_1^{-1} C_1 \\
& P_{t|t}^{-1}\!=\!(A_{t\!-\!1}P_{t\!-\!1|t\!-\!1}A_{t\!-\!1}^\top+\!W_t)^{-1}\!+C_t^\top V_t^{-1} C_t\\
& \hspace{35ex} t=2,\cdots, T.
\end{align*}
The last two constraints are obtained by eliminating $P_{t|t-1}$ from (\ref{kalmanrec}). These equality constraints themselves are difficult to handle, but can be replaced by the inequality constraints
\begin{align*}
&0\prec P_{1|1} \preceq P_{1|0} \\
&0\prec P_{t|t} \preceq A_{t-1}P_{t-1|t-1}A_{t-1}^\top+W_{t-1}.
\end{align*}
These replacements eliminate the variables $\{C_t, V_t\}_{t=1}^T$, and convert the above optimization problem into an alternative problem with respect to $\{P_{t|t}, \Pi_t\}_{t=1}^T$ only, as shown in (\ref{optprob3}).  The eliminated variables $\{C_t, V_t\}_{t=1}^T$ can be easily reconstructed by (\ref{cvconst}).

Solving (\ref{optprob3}) allows us to optimally schedule the sequence of covariance matrices. The optimal covariance sequence can be attained by the Kalman filter (\ref{eqkalmanfilter}).

\begin{figure*}[t]
\centering
\begin{equation}
\label{equationmotion}
\left[\!\!\begin{array}{c}
\dot{\phi} \\ \dot{\theta} \\ \dot{\psi} \\ \dot{\omega}_\phi \\ \dot{\omega}_\theta \\ \dot{\omega}_\psi
 \end{array}\!\!\right]\!\!=\!\!
\left[\!\!\begin{array}{cccccc} 
0&0&0&1&0&0 \\
0&0&0&0&1&0 \\
0&0&0&0&0&1 \\
\!\!-4\omega_0^2 \sigma_x\!\!\! &0&0&0&0& \!\!\!\omega_0(1\!-\!\sigma_x) \\
0& \!\!\!3\omega_0^2 \sigma_y\!\!\!& 0&0&0&0 \\
0&0&\!\!\!\omega_0^2 \sigma_z\!\! & \!\! -\omega_0(1\!+\!\sigma_z)\!\!\! &0&1 
\end{array}\!\!\right]\!\!\!
\left[\!\!\begin{array}{c}
\phi \\ \theta \\ \psi \\ \omega_\phi \\ \omega_\theta \\ \omega_\psi
 \end{array}\!\!\right]\!\!+\!\!
\left[\!\!\begin{array}{ccc} 
0&0&0 \\
0&0&0 \\
0&0&0 \\
0&\!\!\!b_z(t)/I_x\!\!\! &\!\!\!-b_y(t)/I_x \\
-b_z(t)/I_y\!\!\!&0&\!\!b_x(t)/I_y\\
b_y(t)/I_z\!\!\!&\!\!\!-b_x(t)/I_z\!\!\!&0 
\end{array}\!\!\right]\!\!\!
\left[\!\begin{array}{c}u_x \\ u_y \\ u_z \end{array}\!\right]\!\!+\!\!
\left[\!\begin{array}{c}
w_\phi \\ w_\theta \\ w_\psi \\  n_\phi \\ n_\theta \\ n_\psi\end{array}\!\right]
\end{equation}
\vspace{-.07in}
\hrule
\end{figure*}
\begin{figure}[t]
    \centering
    \includegraphics[width=0.6\columnwidth]{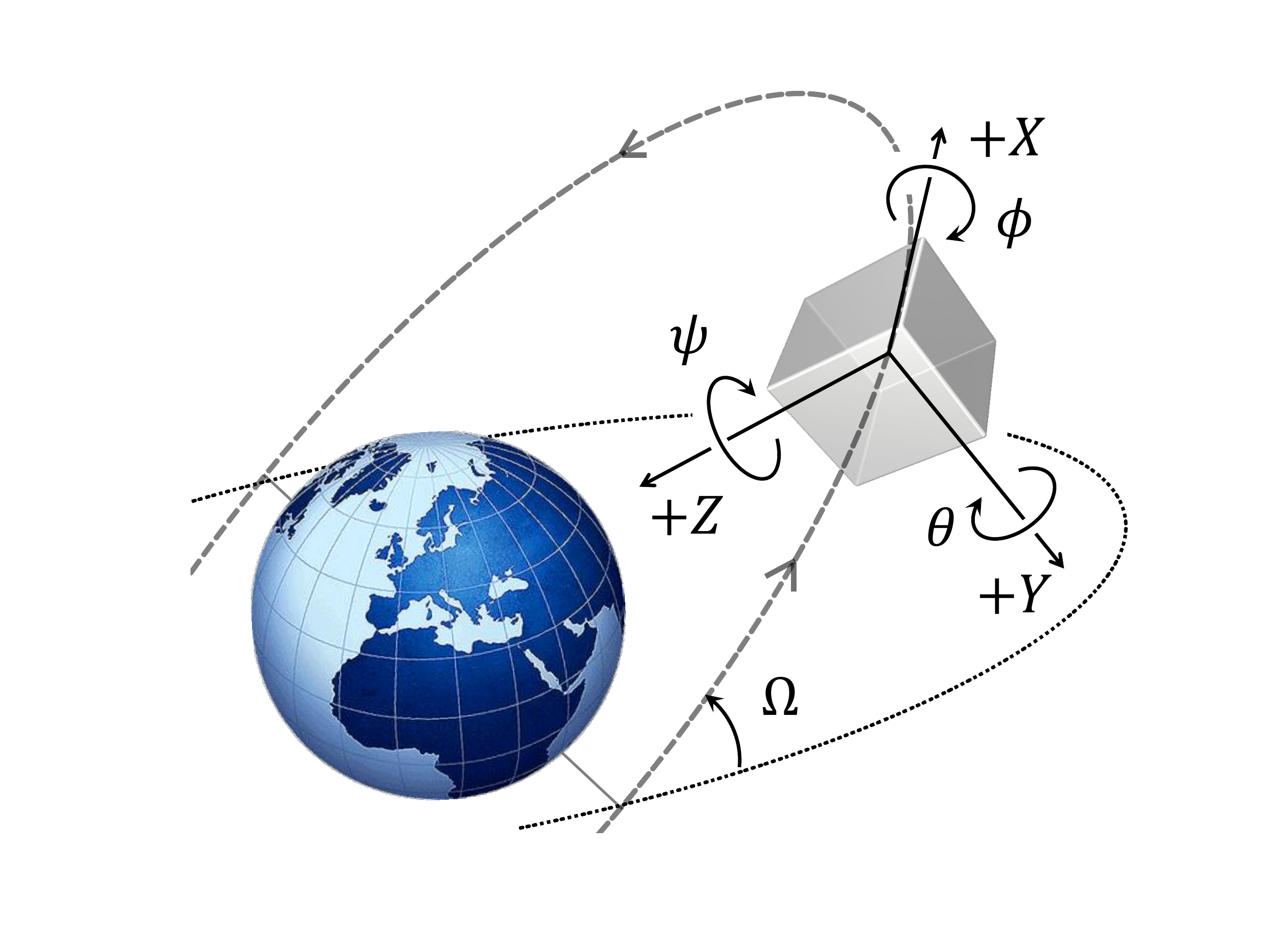}
    \caption{Orbital coordinate and desired attitude of a nadir-pointing satellite.}
    \vspace{-2ex}
    \label{fig:nadir}
\end{figure}
\begin{figure}[h]
    \centering
    \includegraphics[width=\columnwidth]{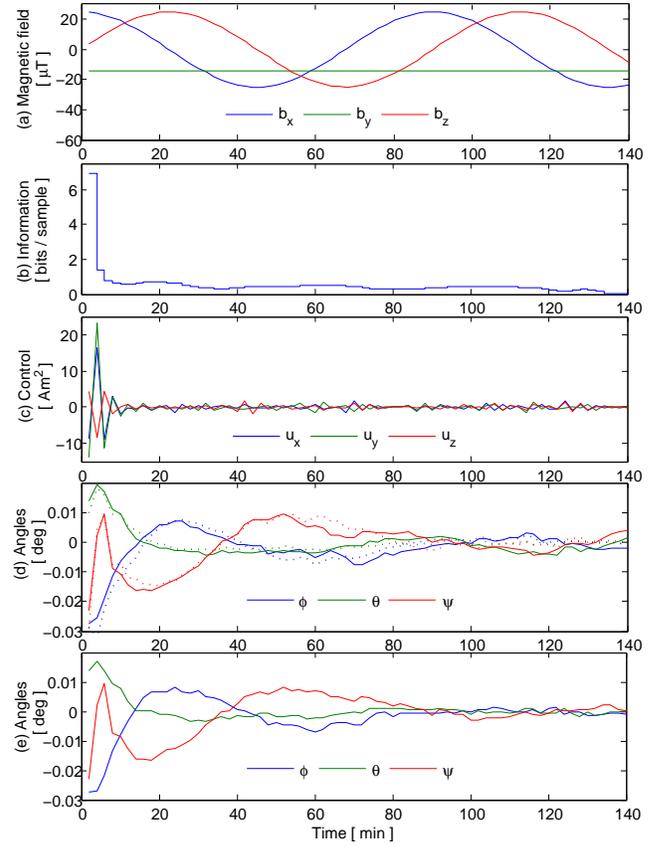}
    \caption{Simulation (Sampling period = 2 min)}
    \vspace{-2ex}
    \label{fig:satellite}
\end{figure}

\section{Example}
In this section, we design an attitude control law for a nadir-pointing spacecraft (Fig. \ref{fig:nadir}) using magnetic torquers.
Small deviations of the body coordinates from the orbital coordinates are measured by angles $\phi, \theta$ and $\psi$, and their dynamics is modeled by a linearized equation of motion (\ref{equationmotion}) borrowed from \cite{psiaki2001magnetic}.

Here, $\omega_0$ is the orbital rate ($2\pi$ [rad] / $90$ [min]), $\text{diag}(I_x,I_y,I_z)$ is the moment of inertia of the  spacecraft, and $\sigma_x=\frac{I_y-I_z}{I_x}$, $\sigma_y=\frac{I_z-I_x}{I_y}$, $\sigma_z=\frac{I_x-I_y}{I_z}$.  The Earth's magnetic field vector $(b_x(t),b_y(t),b_z(t))$ in the orbital coordinate is time varying as the position of the spacecraft changes (the simplified model of the magnetic field shown in Fig. \ref{fig:satellite} (a) will be used). $(u_x,u_y,u_z)$ is the control output of the magnetic torquers. Assume that the spacecraft has an attitude sensor that measures $\phi,\theta,\psi,\omega_\phi,\omega_\theta,\omega_\psi$ accurately, but communication between the attitude sensor and the magnetic torquers incurs cost.

The information-regularized LQG control problem (\ref{simpleprob}) is formulated over the planning horizon of 140 minutes after converting (\ref{equationmotion}) into a discrete-time model with sampling period of 2 minutes, and all necessary parameters are appropriately chosen. 
Fig. \ref{fig:satellite} shows a result of a sensor-controller joint strategy obtained by the Steps 1-5.
Fig. \ref{fig:satellite} (b) shows the optimal assignment of the information rate $I(\bx_t;\by_t|\by^{t-1},\bu^{t-1}), t=1,\cdots, 70$. It can be seen that acquiring a lot of information at the beginning is advantageous in this example.
Simulated control actions and deviation from the desired attitude are shown in subfigures (c) and (d). 
The dotted lines in (d) show the estimated deviations calculated by the Kalman filter (\ref{eqkalmanfilter}).
The resulting costs are $J_{\text{cont}}=1.472$ and $J_{\text{info}}=0.245$. 
For comparison purposes, (e) shows the case where the optimal linear quadratic regulator (LQR) is applied with the perfect measurement $\by_t=\bx_t$ with the same noise realizations. In this case, we have $J_{\text{cont}}=1.352$ and $J_{\text{info}}=+\infty$.
It can be seen that the control performance in (d) is not so much worse than (e), even though the information rate required for (d) is drastically smaller.

\section{Discussion and Future Works}

In this paper, we presented an SDP-based optimal joint sensor-controller synthesis for (restricted) information-regularized LQG control problems. Unfortunately, to the best of the author's knowledge, it is not known whether the same architecture remains optimal in the fully general information-regularized LQG control problem (\ref{mainprob}). The technical difficulty here is that once nonlinear sensor policies $\pi_s$ are allowed, the mutual information term $J_{\text{info}}$ is no longer control-independent in general, and the discussion in Section~\ref{secderivation} does not hold.

Finally, the information-regularized LQG control problem considered in this paper can be viewed as a preliminary step towards a unification of the classical LQG control problem and the Gaussian sequential rate-distortion problem (Fig. \ref{fig:sep3}). In the classical LQG control problem where a sensor model is fixed, the estimator-controller separation principle is well-known. On the other hand, if a feedback control is not considered (or controller is fixed), and $J_{\text{cont}}$ is replaced by $\sum_{t=1}^T \mathbb{E}\|\bx_t-\hat{\bx}_t\|^2$, then the problem becomes the Gaussian sequential rate-distortion problem \cite{TatikondaThesis}, and the sensor-estimator separation principle also holds \cite{1411.7632}.

\begin{figure}[t]
    \centering
    \includegraphics[width=0.8\columnwidth]{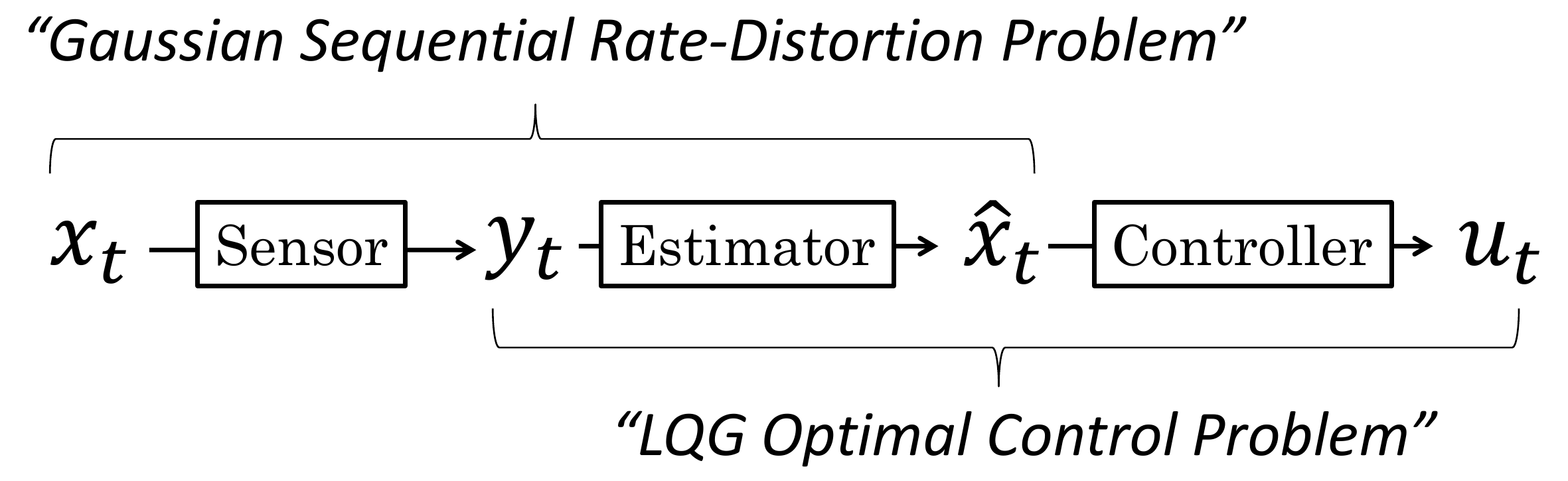}
    \caption{Relationship between sequential rate-distortion problem and LQG optimal control problem.}
    \vspace{-2ex}
    \label{fig:sep3}
\end{figure}


\section*{ACKNOWLEDGMENT}
The authors would like to thank Prof. Sanjoy K. Mitter at MIT for valuable suggestions.

\bibliographystyle{ieeetr}
\bibliography{ACC2015}

\section*{APPENDIX}
We consider $\min_{\pi_c} J_{\text{cont}}$ as a $T$-stage dynamic programming problem.
The state of the system at stage $t$ is a joint probability measure $p(dx^t,dy^t,du^{t-1})$ which is updated by
\begin{align*}
p(dx^{t+1},dy^{t+1},du^t)=& q(dy_{t+1}|x_{t+1})f(dx_{t+1}|x_t,u_t) \\
&\times  q(du_t|y^t,u^{t-1})p(dx^t,dy^t,du^{t-1}).
\end{align*}
Here, the stochastic kernel $f(dx_{t+1}|x_t,u_t)$ is given by (\ref{eqprocess}), while $q(dy_{t+1}|x_{t+1})$ is the sensing policy, which is assumed to be fixed.
The stochastic kernel $q(du_t|y^t,u^{t-1}) \in \mathcal{Q}_{\bu_t|\by^t,\bu^{t-1}}$ is the control variable in this dynamic programming formulation.
The associated Bellman's equation is
\begin{align*} &J_t(p(dx^t,dy^t,du^{t-1}))=\\
&\!\min_{\mathcal{Q}_{\bu_t|\by^t\!\!,\bu^{t\!-\!1}}}\!\! \left\{\! \frac{1}{2}\mathbb{E}\!\left(\|\bx_{t+1}\|_{Q_t}^2 \!\!\!+\!\|\bu_t\|_{R_t}^2\right) \!\!+\!\! J_{t+1}(p(dx^{t+1}\!\!\!,dy^{t+1}\!\!\!,du^t))\!\right\}
\end{align*}
with the boundary condition $J_{T+1}(\cdot)=0$.

\begin{claim}
\label{claima}
For every $t=1,\cdots, T$, the certainty equivalence controller $\bu_t=K_t\hat{\bx}_t$ where $\hat{\bx}\triangleq \mathbb{E}(\bx_t|\by^t,\bu^{t-1})$ is the optimal control policy in
$\mathcal{Q}_{\bu_t|\by^t,\bu^{t-1}}$. Moreover, for every $t=1,\cdots,T$,
\begin{align} 
&J_t(q_{\bx^t,\by^t,\bu^{t-1}})= \nonumber  \\
&\frac{1}{2}  \mathbb{E}\|\bx_t\|_{N_t}^2\!\!+\!\frac{1}{2}\sum_{k=t}^T\left(\text{Tr}(W_kS_k)\!+\! \text{Tr}(\Theta_k P_{k|k})\right). \label{eqjt}
\end{align}
\end{claim}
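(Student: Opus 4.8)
The plan is to prove the claim by backward induction on $t$, from $t=T$ down to $t=1$, working directly with the measure-valued Bellman recursion stated above. The one structural fact I would import from the main text is that, because the sensing policy is the linear equation (\ref{senseeq}), standard Kalman filtering gives $p(dx_t|y^t,u^{t-1})\sim\mathcal{N}(\hat{\bx}_t,P_{t|t})$ with the \emph{error covariance} $P_{t|t}$ from (\ref{kalmanrec}) being \emph{deterministic}, i.e. independent of the realized observations and controls. This ``no dual effect'' of linear sensing is exactly what makes all the covariance/trace quantities below honest constants rather than random functionals, and it is what allows the dynamic program to close in finite dimensions.

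At stage $t$, I would first fold the future cost into the stage cost: substituting the induction hypothesis $J_{t+1}=\frac12\mathbb{E}\|\bx_{t+1}\|_{N_{t+1}}^2+\frac12\sum_{k=t+1}^T(\text{Tr}(W_kS_k)+\text{Tr}(\Theta_kP_{k|k}))$ into the Bellman equation and using $S_t=Q_t+N_{t+1}$, the quantity to be minimized becomes $\frac12\mathbb{E}(\|\bx_{t+1}\|_{S_t}^2+\|\bu_t\|_{R_t}^2)$ plus the carried-over constant. Expanding $\bx_{t+1}=A_t\bx_t+B_t\bu_t+\bw_t$ and using that $\bw_t$ is zero-mean and independent of $(\bx_t,\bu_t)$, the cross terms in $\bw_t$ vanish and $\bw_t$ contributes precisely $\text{Tr}(W_tS_t)$. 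Writing $\mathbb{E}(\cdot)=\mathbb{E}\,\mathbb{E}(\cdot|\by^t,\bu^{t-1})$ and noting that the control kernel may assign an arbitrary value to $\bu_t$ for each realization of $(\by^t,\bu^{t-1})$, the minimization over $\mathcal{Q}_{\bu_t|\by^t,\bu^{t-1}}$ reduces to a pointwise minimization of the conditional cost; since $\bu_t$ is $\sigma(\by^t,\bu^{t-1})$-measurable, completing the square with Hessian $M_t=B_t^\top S_tB_t+R_t\succ0$ gives the unique minimizer $\bu_t=-M_t^{-1}B_t^\top S_tA_t\,\hat{\bx}_t=K_t\hat{\bx}_t$, the certainty equivalence controller.

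To evaluate the minimized value I would use the orthogonal decomposition $\bx_t=\hat{\bx}_t+\tilde{\bx}_t$ with $\hat{\bx}_t\perp\tilde{\bx}_t$ and $\text{cov}(\tilde{\bx}_t)=P_{t|t}$. The quadratic form in $\hat{\bx}_t$ collapses, via $K_t=-M_t^{-1}B_t^\top S_tA_t$, to $\mathbb{E}\|\hat{\bx}_t\|_{N_t}^2$ with $N_t=A_t^\top(S_t-S_tB_tM_t^{-1}B_t^\top S_t)A_t$, and the error $\tilde{\bx}_t$ contributes a trace term. The single identity that makes the bookkeeping close is $A_t^\top S_tA_t-N_t=A_t^\top S_tB_tM_t^{-1}B_t^\top S_tA_t=\Theta_t$; combining $\mathbb{E}\|\hat{\bx}_t\|_{N_t}^2=\mathbb{E}\|\bx_t\|_{N_t}^2-\text{Tr}(N_tP_{t|t})$ with the error trace yields exactly $\frac12\mathbb{E}\|\bx_t\|_{N_t}^2+\frac12(\text{Tr}(W_tS_t)+\text{Tr}(\Theta_tP_{t|t}))$ plus the carried-over sum, which is (\ref{eqjt}). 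The base case $t=T$ is the same computation with $S_T=Q_T$ and $J_{T+1}=0$.

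I expect the main obstacle to be the justification that minimization over the abstract control kernel $\mathcal{Q}_{\bu_t|\by^t,\bu^{t-1}}$ genuinely reduces to the pointwise, finite-dimensional minimization used above: this rests on the measurability of the induced value functional and, crucially, on the control-independence of $P_{t|t}$, i.e. on the absence of a dual effect under linear sensing. Granting this, the remainder is the routine linear-algebraic verification of $A_t^\top S_tA_t-N_t=\Theta_t$ and the collection of trace terms, which I would relegate to a short computation.
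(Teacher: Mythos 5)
Your proposal is correct and follows essentially the same route as the paper's appendix: backward induction on the measure-valued Bellman recursion, folding the future cost in via $S_t=Q_t+N_{t+1}$, completing the square with Hessian $M_t$ to extract the certainty-equivalence minimizer, and evaluating the residual term as $\text{Tr}(\Theta_t P_{t|t})$ (the paper does this directly through $\mathbb{E}\|\bu_t-K_t\bx_t\|_{M_t}^2$, which is the same orthogonality argument as your explicit $\bx_t=\hat{\bx}_t+\tilde{\bx}_t$ decomposition). Your added remarks on the reduction of the kernel minimization to pointwise deterministic selection and on the control-independence of $P_{t|t}$ make explicit steps the paper leaves implicit, but they do not change the argument.
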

\begin{proof}
Equation (\ref{eqjt}) holds when $t=T$ as
\begin{align*}
&J_T(q_{\bx^T,\by^T,\bu^{T-1}})  \\
&=\min_{\mathcal{Q}_{\bu_T|\by^T\!\!,\bu^{T\!-\!1}}}\!\frac{1}{2}\mathbb{E}\left(
\|A_t\bx_T\!+\!B_t\bu_T\!+\!\bw_T\|_{Q_T}^2\!+\!\|\bu_T\|_{R_T}^2\right)  \\
&=\min_{\mathcal{Q}_{\bu_T|\by^T\!\!,\bu^{T\!-\!1}}}\!\frac{1}{2}\mathbb{E}\left(
\|\bx_T\|_{N_T}^2\!\!+\!\|\bw_T\|_{Q_T}^2\!\!+\!\|\bu_T\!-\!K_T\bx_T\|_{M_T}^2\right)  \\
&=\frac{1}{2}\mathbb{E}\|\bx_T\|_{N_T}^2+\frac{1}{2}\left(\text{Tr}(W_TQ_T)+\text{Tr}(\Theta_TP_{T|T})\right).
\end{align*}
Notice that in the second expression, $\bu_T$ appears only in $\|\bu_T\!-\!K_T\bx_T\|_{M_T}^2$. By choosing $\bu_T=K_T\mathbb{E}(\bx_T|\by^T)$, this quantity attains its minimum value $\mathbb{E}\|K_T(\bx-\mathbb{E}(\bx_T|\by^T))\|_{M_T}^2=\text{Tr}(\Theta_TP_{T|T})$. So assume (\ref{eqjt}) holds for $t=l+1$. Then
\begin{align*}
&J_l(q_{\bx^l,\by^l,\bu^{l-1}})  \\
&=\min_{\mathcal{Q}_{\bu_T|\by^T\!\!,\bu^{T\!-\!1}}}\!
\left\{\frac{1}{2}\mathbb{E}\left(
\|\bx_{l+1}\|_{Q_l}^2\!+\!\|\bu_l\|_{R_l}^2\right)+\frac{1}{2}\mathbb{E}\|\bx_{l+1}\|_{N_{l+1}}^2  \right. \\
& \hspace{15ex}+\left. \frac{1}{2}\sum_{k=l+1}^T\left(\text{Tr}(W_kS_k)\!+\! \text{Tr}(\Theta_k P_{k|k})\right) \right\} \\
&=\min_{\mathcal{Q}_{\bu_T|\by^T\!\!,\bu^{T\!-\!1}}}\!
\left\{\frac{1}{2}\mathbb{E}\left(
\|\bx_{l+1}\|_{S_l}^2\!+\!\|\bu_l\|_{R_l}^2\right)  \right. \\
& \hspace{15ex}+\left. \frac{1}{2}\sum_{k=l+1}^T\left(\text{Tr}(W_kS_k)\!+\! \text{Tr}(\Theta_k P_{k|k})\right) \right\}  \\
&=\frac{1}{2}\sum_{k=l+1}^T\left(\text{Tr}(W_kS_k)\!+\! \text{Tr}(\Theta_k P_{k|k})\right) \\
& \hspace{5ex}+ \min_{\mathcal{Q}_{\bu_l|\by^l\!\!,\bu^{l\!-\!1}}}\frac{1}{2}\mathbb{E}\left( \|A_l\bx_l+B_l\bu_l+\bw_l\|_{S_l}^2+\|\bu_l\|_{R_l}^2 \right) \\
&=\frac{1}{2}\sum_{k=l+1}^T\left(\text{Tr}(W_kS_k)\!+\! \text{Tr}(\Theta_k P_{k|k})\right) \\
& \hspace{10ex}+ \frac{1}{2}\mathbb{E}\|\bx_l\|_{N_l}^2+\frac{1}{2}\left(\text{Tr}(W_lS_l)\!+\! \text{Tr}(\Theta_l P_{l|l})\right) \\
&=\frac{1}{2}  \mathbb{E}\|\bx_l\|_{N_l}^2+\frac{1}{2}\sum_{k=l}^T\left(\text{Tr}(W_kS_k)+ \text{Tr}(\Theta_k P_{k|k})\right).
\end{align*}
\end{proof}
Noticing $\mathbb{E}\|\bx_1\|_{N_1}^2\!\!=\!\text{Tr}(N_1P_{1|0})$, Lemma \ref{claim1} follows from Claim \ref{claima}.


\end{document}